\newtheorem{thm}{Theorem}[section]
\newtheorem{lemma}[thm]{Lemma}
\newtheorem{prop}[thm]{Proposition}
\newcommand{\ps}{\psfrag}
\newtheorem{defn}[thm]{Definition}
\newcommand{\R}{{\mathbb R}}
\newcommand{\E}[1]{\mathsf E(#1)}
\newcommand{\SH}[2]{\mathsf{SH}_{#1}(#2)}
\renewcommand{\o}{\mathsf}	 
\newcommand{\HP}{\mathbf{H}^{2}}
\DeclareMathOperator{\arccosh}{arccosh}
\newcommand{\HD}{d}
\newcommand{\HDS}{{\HD}^2}
\newcommand{\BHD}{\rho}
\newcommand{\B}{\mathbf{B}}
\newcommand{\vu}{\mathsf{u}}
\newcommand{\vq}{\mathsf{q}}
\newcommand{\vp}{\mathsf{p}}
\newcommand{\vy}{\mathsf{y}}
\newcommand{\vx}{\mathsf{x}}
\newcommand{\vv}{\mathsf{v}}
\newcommand{\da}{\textsc{a}}
\newcommand{\db}{\textsc{b}}
\newcommand{\dc}{\textsc{c}}
\newcommand{\dd}{\textsc{d}}
\newcommand{\dk}{\textsc{k}}
\begin{document}

\title[Bisectors in the bidisk]{Bisectors determining unique pairs of points in the bidisk}
\author[Charette]{Virginie Charette}
  \address{D\'epartement de math\'ematiques\\ Universit\'e de Sherbrooke\\
  Sherbrooke, Quebec, Canada}
  \email{v.charette@usherbrooke.ca}

\author[Drumm]{Todd A.\ Drumm}
\address{Department of Mathematics\\ Howard University\\
     Washington, DC 20059  USA }
    \email{tdrumm@howard.edu}

\author[Kim]{Youngju Kim$\dag$}
  \address{Department of Mathematics Education\\Konkuk University\\ Neungdong-ro 120 Gwangjin-gu, Seoul 05029, Republic of  Korea}
   \email{geometer2@konkuk.ac.kr}

\date{\today}

\thanks{Charette gratefully acknowledges partial support from the
  Natural Sciences and Engineering Research Council of Canada, and thanks Konkuk University for its hospitality while this paper was finished.\\
 Drumm gratefully acknowledges partial support from NSF grant DMS1107367
``Research Networks in the Mathematical Sciences: Geometric structures And Representation varieties" (the GEAR Network).\\
 $\dag$ This research was supported by Basic Science Research Program through the National Research Foundation of Korea (NRF) funded by the Ministry of Education, Science and Technology (NRF-2011-0021240).}

\begin{abstract}
Bisectors are equidistant hypersurfaces between two points and are basic
objects in a metric geometry. They play an important part in understanding the action
of subgroups of isometries  on a metric space. In many
metric geometries (spherical, Euclidean, hyperbolic, complex hyperbolic, to name a few) bisectors do not uniquely determine a pair of points, in the following sense\,:
completely different sets of points share a common bisector. The above examples of this
non-uniqueness are all rank $1$  symmetric spaces.
 However, as we show in this paper, bisectors in the usual $L^2$ metric are such for a unique pair of points
in the rank $2$ geometry
$\HP \times\HP$.

\end{abstract}

\maketitle

\section{Introduction}
Suppose $X$ is a metric space with metric $\BHD$ and isometry group $\o{Isom}(X)$.
If $\Gamma\subset\o{Isom}(X)$ is a
discrete group that acts properly on $X$ then\,:
\[
\Delta_{\Gamma} (x) =  \{ y\in X \, \vert \, \BHD(y, x) \leq \BHD (\gamma (y),x ) \, \forall \gamma\in\Gamma \}
\]
is called the \emph{Dirichlet domain} of the action of $\Gamma$ centered at the point $x$. It consists of all
the points closer to $x\in X$ than any other element of the $\Gamma$-orbit of $x$. The boundary
of the Dirichlet domain consists of pieces of
\emph{bisectors}, which are equidistant hypersurfaces between two points\,:
\[
\E{x,y} =  \{ z\in X \, \vert \, \BHD(x,z) = \BHD (y,z)\}.
\]

Bisectors are basic and well known  objects in  many metric homogeneous geometries.
\begin{itemize}
\item In the Euclidean spaces $\mathbf{R}^n$, bisectors are
hyperplanes.
\item In  the spherical spaces $S^n$, bisectors are the higher dimensional analog of great circles.
\item In the upper half-space models of real hyperbolic spaces $\mathbf{H}^n$, bisectors are hemispheres centered on the boundary
 or vertical hyperplanes orthogonal to the boundary.
\item In the  complex hyperbolic spaces $\mathbf{CH}^n$, bisectors are well-studied and used in understanding
discrete group actions. (See for instance \cite{Aebischer-Miner, Goldman-Parker:triangle, Mostow}.)
\end{itemize}

All of these are rank $1$ geometries. 
In contrast, the {\em bidisk} $\HP\times\HP$ is a rank $2$
geometry.  (Note that we are working with the usual $L^2$ metric.)  Here, bisectors come in two shapes, depending on whether $\vx,\vy\in\mathbf{H}^2 \times\mathbf{H}^2$ share a common coordinate\,:
\begin{itemize}
\item If $\vx$ and $\vy$ share a common coordinate, then $\E{\vx,\vy}$ is foliated by copies of the hyperbolic plane $\HP$ and contains a 1-dimensional family of flats;
\item otherwise, $\E{\vx,\vy}$ contains no copy of the hyperbolic plane and a single flat~\cite{CDD}.  We will say that this bisector is {\em generic}.
\end{itemize}

The aim of this paper is to prove that in the bidisk, in the generic case, pairs of points are uniquely determined by their bisectors.
\begin{thm}\label{thm:main}
Let $\vx,\vy,\vu,\vv\in\mathbf{H}^2 \times\mathbf{H}^2$ such that
$\E{\vx,\vy}=\E{\vu,\vv }$.  Assume that the bisector is generic.  Then\,:
\begin{equation*}
\{ \vx,\vy \} =\{\vu, \vv \}.
\end{equation*}
\end{thm}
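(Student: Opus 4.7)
The plan is to extract progressively refined data from the common bisector $B:=\E{\vx,\vy}=\E{\vu,\vv}$---first a common reflection isometry, then a common midpoint, and finally the individual Fermi coordinates of the pair.

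First, since $B$ is generic, the unique flat it contains (by~\cite{CDD}) serves both pairs; call this flat $F$. Any flat of $\HP\times\HP$ is a product $\gamma_1\times\gamma_2$ of geodesics, uniquely decomposed by projection onto each factor. The flat contained in $\E{\vx,\vy}$ is the product $\mu_1^{\vx,\vy}\times\mu_2^{\vx,\vy}$ of the perpendicular bisectors of $\{x_i,y_i\}$ in each $\HP$-factor, and similarly for $\{\vu,\vv\}$. Hence $\mu_i^{\vx,\vy}=\mu_i^{\vu,\vv}=:\mu_i$, so $\vy=R(\vx)$ and $\vv=R(\vu)$, where $R=R_1\times R_2$ is the product of the reflections $R_i$ of $\HP$ across $\mu_i$. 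Introduce Fermi coordinates $(s,t)$ on each factor along $\mu_i$ and write $\vx=((a_1,b_1),(a_2,b_2))$ and $\vu=((c_1,d_1),(c_2,d_2))$, normalizing $b_1,d_1>0$ by swapping labels if necessary.

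Next, set $g_i(z):=\HDS(x_i,z)-\HDS(y_i,z)$ and $\tilde g_i(z):=\HDS(u_i,z)-\HDS(v_i,z)$, so $B$ is the zero set of $g_1+g_2$ and also of $\tilde g_1+\tilde g_2$. If $b_2 d_2<0$ (opposite orientation), produce an explicit point in the symmetric difference: choose $z_2$ with $g_2(z_2)=-4b_1^2$; then $(y_1,z_2)\in B$ because $g_1(y_1)=4b_1^2$, and $z_2$ lies on the $x_2$-side of $\mu_2$. Under the opposite-orientation hypothesis, $u_2$ lies on the $y_2$-side of $\mu_2$, so $z_2$ lies on the $v_2$-side, and a direct sign check gives $\tilde g_1(y_1)>0$ and $\tilde g_2(z_2)>0$. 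Their sum is strictly positive, so $(y_1,z_2)\notin\E{\vu,\vv}$, contradicting $B=\E{\vu,\vv}$.

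In the same-orientation case ($b_2 d_2>0$) one shows $\vx=\vu$. The key claim is that the midpoint $\vp=((a_1,0),(a_2,0))$ of $\vx$ and $\vy$ is intrinsically determined by $B$: it is the unique point $\vp^*\in F$ for which $B$ intersects the totally geodesic perpendicular flat $\Pi_{\vp^*}$ (the product of the geodesics in each factor perpendicular to $\mu_i$ through $\vp^*$) in a geodesic of $\Pi_{\vp^*}$. Indeed, at $\vp^*=\vp$ the point $(a_i,t)$ and $x_i$ lie on a common perpendicular to $\mu_i$, so $g_i((a_i,t))=-4b_i t$ is \emph{exactly} linear in $t$, and $B\cap\Pi_\vp$ is the straight line $\{b_1 t_1+b_2 t_2=0\}$; at $\vp^*\ne\vp$ hyperbolic curvature forces a nonvanishing cubic coefficient in the Taylor expansion of $g_i$ along the perpendicular $\mu_i^\perp(a_i+\delta_i)$, so $B\cap\Pi_{\vp^*}$ is not a straight line. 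Hence the two midpoints coincide, which gives $a_i=c_i$. The tangent direction of $B\cap\Pi_\vp$ at $\vp$ gives the ratio $b_1:b_2$, and tracking how this tangent direction varies as $\vp^*$ moves along $F$---equivalently, examining the slope function $\delta\mapsto L\sinh b_1/(b_2\sinh L)$ with $L=\arccosh(\cosh b_1\cosh\delta)$---recovers $b_1$ and $b_2$ individually. The main obstacle is the two analytic verifications: that the cubic coefficient of $g_i((a_i+\delta,t))$ in $t$ vanishes only when $\delta=0$, and that the induced slope function is injective in $(b_1,b_2)$.
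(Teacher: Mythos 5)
Your overall route is genuinely different from the paper's: instead of analyzing the foliation by square hyperbolae (the paper's Lemma~\ref{lem:samehyperbolae}, the Lambert-quadrilateral collinearity argument, and the quotient $\Phi_\Omega$ of level functions), you extract the common reflection from the unique flat, dispose of the opposite-orientation case by exhibiting an explicit point in the symmetric difference, and then try to read off the midpoint and the distances $b_1,b_2$ from cross-sections of the bisector. The first two stages are sound: the spine is indeed the product of the perpendicular bisectors, its uniqueness gives $y_i=R_i(x_i)$ and $v_i=R_i(u_i)$, and your sign computation for $(y_1,z_2)$ correctly rules out $b_2d_2<0$. The computation $g_i((a_i,t))=-4b_it$ and the resulting ratio $b_1:b_2=d_1:d_2$ are also correct.

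However, the same-orientation case --- which is where the entire difficulty of the theorem lives --- rests on two claims that are not proved, and one of them is not even correctly reduced. First, to conclude that $B\cap\Pi_{\vp^*}$ fails to be a geodesic for $\vp^*\neq\vp$, it does not suffice that each $g_i((a_i+\delta_i,t))$ has a nonvanishing cubic coefficient $e_i$ when $\delta_i\neq 0$: writing $c_i$ for the linear coefficients, the cubic obstruction to the zero set of $g_1(t_1)+g_2(t_2)$ being the line $c_1t_1+c_2t_2=0$ is the single quantity $e_1-e_2(c_1/c_2)^3$, which could a priori vanish for some $(\delta_1,\delta_2)\neq(0,0)$ even with both $e_i\neq 0$ (and if it does, you would need to chase higher-order terms). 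Second, and more seriously, after locating the midpoint you still have a one-parameter family of candidates $(d_1,d_2)=s(b_1,b_2)$, and ruling out $s\neq 1$ requires the injectivity in $b$ of $b\mapsto L\sinh b/(b\sinh L)$ with $L=\arccosh(\cosh\delta\cosh b)$, for some fixed $\delta\neq0$. That monotonicity statement is precisely the analogue of the paper's final Proposition (where the Taylor expansion $f(s,t)=t^2+(t\coth t)s^2+O(s^4)$ and the monotonicity of $x\coth x$ do the work), and you assert it without proof. Since you yourself flag these two verifications as the main obstacle, the proposal is an interesting alternative outline but not yet a proof.
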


One can easily see that uniqueness fails in the non-generic case, which behaves like the hyperbolic plane.

The paper is structured as follows.  In \S\ref{sec:prelim}, we introduce basic features of the bidisk and the hyperbolic plane, in particular {\em square hyperbolae}, whose products are the leaves in a foliation of a bisector.  In \S\ref{sec:leaves}, we show that if two bisectors are equal, then they must share the same leaves up to re-parametrization.  In \S\ref{sec:collinear}, we next show a collinearity condition on the points defining the bisectors.  Finally, \S\ref{sec:SH} completes the proof of Theorem~\ref{thm:main}, by showing that a certain function must be constant, in order for the bisectors to be equal.

Throughout the paper, we will be dealing with generic bisectors.

\section{Preliminaries}\label{sec:prelim}
We denote the bidisk as follows\,:
\begin{equation*}
\B = \HP \times \HP .
\end{equation*}
Points in the bidisk are pairs $\vx = (x_1, x_2)$, where
$x_i$ are points in the hyperbolic plane $\HP$.

In this paper, we consider the usual $L^2$ metric, so that the distance between two points $\vx,\vy\in\B$ is\,:
\[
\BHD(\vx, \vy) = \sqrt{ \HDS(x_1, y_1) + \HDS (x_2, y_2) }
\]
 where $d( \cdot , \cdot )$ denotes the distance in $\HP$.
The projection of geodesics in $\B$ are geodesics (or points) in $\HP$.

{\bf Convention for notation.}  For the remainder of the paper, we will use a letter in sans-serif font to denote a point in $\B$ and the same letter, in regular font, to denote its coordinates in $\HP$\,:
\begin{align*}
\vx & = (x_1,x_2) \\
\vy & = (y_1,y_2)
\end{align*}
etc.

\subsection{Bisectors}
Given $\vx,\vy\in\B$, their {\em bisector}, denoted $\E{\vx,\vy}$,
is the set of points equidistant to $\vx$ and $\vy$\,:
\begin{equation*}
\E{\vx,\vy}=\{\vp\in\B~\mid~\rho (\vx , \vp)=\rho (\vy , \vp)\} .
\end{equation*}
Writing the points in coordinates, the equality appearing in the definition may be re-written as follows\,:
\[
\begin{array}{rcl}
\sqrt{ \HDS (x_1,p_1) + \HDS (x_2, p_2) }  & = &\sqrt{ \HDS (y_1,p_1) +
\HDS (y_2, p_2) } \\
 \\
 \HDS (x_1,p_1) - \HDS (y_1, p_1)  & = & \HDS (y_2,p_2) -\HDS (x_2, p_2).
\end{array}
\]
Note that if $x_1=y_1$ or $x_2=y_2$, we are in the non-generic case where $\E{\vx,\vy}$ is the product of the hyperbolic plane with a geodesic in the hyperbolic plane.

The last equation motivates the following definition for the hyperbolic plane~\cite{CDD}. 
\begin{defn}
Let $x, y$ be distinct points in $\HP$ and let $k\in\R$. The \emph{square hyperbola of level $k$} is\,:
\[
\SH{k}{x,y} = \{ p\in \HP ~\vert~ \HDS( x,p) - \HDS (y,p) = k \}.
\]
Additionally, the \emph{level function} $L_{x,y}: \HP \rightarrow \R$ is\,:
\[
L_{x,y}(p) =  \HDS (x,p) - \HDS(y,p).
\]
\end{defn}
The level function is defined so that $ p \in \SH{L_{x,y} (p)}{ x, y}$.

Let $x,y\in\HP$ be distinct points.  Observe that any point $p\in \HP$ lies on the square
hyperbola of level
$L_{x,y} (p)$.

Denote by $\lambda$ the geodesic between $x$ and $y$.  For every $k\in\R$, the square hyperbola $\SH{k}{x, y}$ is symmetric with respect to
$\lambda$.  (See Lemma~\ref{lem:hypsymm}.)  When $k=0$, $\SH{0}{x, y}$ is itself a geodesic and it
intersects $\lambda$ at the midpoint between $x$ and $y$.

Reflection
in $\SH{0}{x, y}$
 interchanges $\SH{k}{x, y}$ with $\SH{-k}{x, y}$, in particular $x$ and $y$,  while
preserving (as a set) the geodesic $\lambda$.

A generic bisector admits a natural foliation
by products of square hyperbolae\,:
\begin{equation}\label{eq:foliation}
\E{\vx, \vy}= \bigcup_{k\in\R}   \SH{k}{x_1, y_1}\times
\SH{-k}{x_2, y_2}.
\end{equation}

Given $\vx,\vy\in \B$ and any  $p\in\HP$, there
exists a point $\vp\in\B$ lying on the bisector $\E{\vx, \vy}$ with $p$ as
one of its coordinates. For example, given $p_1\in\HP$, choose any
$p_2\in\SH{- L_{x_1, y_1} (p_1 )}{x_2, y_2}$.

The leaf $\SH{0}{x_1, y_1}\times \SH{0}{x_2, y_2}$ is called the \emph{spine} of the
bisector. The spine of a bisector is a \emph{flat}\,: it is a geodesically complete surface with zero
sectional curvature. Moreover, spines are the only square hyperbolae which are flats.

\section{Uniqueness of leaves}\label{sec:leaves}
In this section, we will see that if two generic bisectors are equal, then they must share the same leaves, possibly for different values of the level function.

\begin{lemma}\label{lem:samehyperbolae}
Let $\vx,\vy,\vu,\vv\in\B$ such that $\E{\vx ,\vy} = \E{\vu, \vv}$.  Assume that the bisector is generic. Then for every $k\in\R$, there exists $m\in\R$ such that\,:
\begin{equation*}
\SH{k}{x_1,y_1}\times\SH{-k}{x_2,y_2}=\SH{m}{u_1,v_1}\times\SH{-m}{u_2,v_2}.
\end{equation*}
Furthermore, if $k=0$, then $m=0$.
\end{lemma}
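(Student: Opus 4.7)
The plan is as follows. Fix $k\in\R$ and pick a point $\vp=(p_1,p_2)$ on the leaf $\SH{k}{x_1,y_1}\times\SH{-k}{x_2,y_2}$ of the $(\vx,\vy)$-foliation. Since $\vp$ also lies in $\E{\vu,\vv}$, set $m:=L_{u_1,v_1}(p_1)$; the defining identity $L_{u_1,v_1}(p_1)+L_{u_2,v_2}(p_2)=0$ of the second bisector then forces $L_{u_2,v_2}(p_2)=-m$, so $\vp$ lies on the leaf $\SH{m}{u_1,v_1}\times\SH{-m}{u_2,v_2}$ of the $(\vu,\vv)$-foliation. The task is to prove that these two leaves are equal as subsets of $\B$.

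I would establish this by proving the equality of first factors, then of second factors, separately. To see $\SH{k}{x_1,y_1}\subseteq \SH{m}{u_1,v_1}$, take any $p_1'\in\SH{k}{x_1,y_1}$; by the foliation formula \eqref{eq:foliation}, the point $(p_1',p_2)$ lies on $\SH{k}{x_1,y_1}\times\SH{-k}{x_2,y_2}\subseteq \E{\vx,\vy}=\E{\vu,\vv}$, so $L_{u_1,v_1}(p_1')=-L_{u_2,v_2}(p_2)=m$, meaning $p_1'\in\SH{m}{u_1,v_1}$. Swapping the roles of the two pairs yields the reverse inclusion, and the identical argument applied to the second coordinate yields $\SH{-k}{x_2,y_2}=\SH{-m}{u_2,v_2}$. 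Hence the two leaves coincide.

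For the supplementary claim that $k=0$ implies $m=0$: the leaf $\SH{0}{x_1,y_1}\times\SH{0}{x_2,y_2}$ is the spine of $\E{\vx,\vy}$, hence a flat. By the previous step it equals some leaf $\SH{m}{u_1,v_1}\times\SH{-m}{u_2,v_2}$ of the $(\vu,\vv)$-foliation, which must therefore be a flat. Since the preliminaries record that the spine is the only flat leaf of a generic bisector, this leaf must be the spine of $\E{\vu,\vv}$, forcing $m=0$.

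I do not anticipate a serious obstacle. The whole argument rests on three ingredients already in hand: the foliation formula \eqref{eq:foliation}, the disjointness of distinct square hyperbolae $\SH{\ell}{\cdot,\cdot}$ (they are level sets of the single function $L_{\cdot,\cdot}$, used implicitly when converting an $L$-value into a level index), and the uniqueness of the spine among flat leaves. The one mild point that deserves a line of justification is that the leaves under discussion are non-empty, but each of them contains the starting point $\vp$, so this is automatic.
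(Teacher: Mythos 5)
Your proposal is correct and follows essentially the same argument as the paper: fix one coordinate of a point on the common bisector and let the other coordinate range over its square hyperbola to transfer level sets from one parametrization to the other, then invoke uniqueness of the flat spine for the $k=0$ case. The only cosmetic difference is the order in which the two factors are treated.
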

\begin{proof}
The last statement follows immediately from the observation that the spine is the unique flat in a bisector.

Let $\vq\in\E{\vx ,\vy} = \E{\vu, \vv}$.  Let $k,m\in\R$ such that\,:
\begin{align*}
(q_1 , q_2) & \in \SH{k}{x_1,y_1} \times \SH{-k}{x_2,y_2} \\
(q_1 , q_2) & \in \SH{m}{u_1,v_1} \times \SH{-m}{u_2,v_2}.
\end{align*}
Then\,:
\begin{align*}
L_{x_1, y_1} ( q_1) & = k \\
L_{u_1, v_1} ( q_1) & =m.
\end{align*}

By definition of the bisector, for any $p_2\in\SH{-k}{x_2,y_2}$, the point $(q_1,p_2)$ belongs to $\E{\vx ,\vy} $.
Since the bisectors are equal, $(q_1,p_2)$ also belongs to $\E{\vu ,\vv} $ and thus\,:
\begin{equation*}
L_{u_2, v_2} ( p_2)  =-m.
\end{equation*}
Since $p_2$ was chosen arbitrarily on $\SH{-k}{x_2,y_2}$ (and applying the same argument to $\SH{-m}{u_2,v_2}$), it follows that\,:
\begin{equation*}
\{q_1\}\times\SH{-k}{x_2,y_2}=\{q_1\}\times\SH{-m}{u_2,v_2}.
\end{equation*}
Repeating the same argument in the first factor, we have\,:
\begin{equation*}
\SH{k}{x_1,y_1} \times\SH{-k}{x_2,y_2}=\SH{m}{u_1,v_1} \times\SH{-m}{u_2,v_2}
\end{equation*}
as desired.
\end{proof}

Consequently, when two generic bisectors are equal, the square hyperbolae must be the same in each factor, up to reparametrizing the level function.
This motivates the following definition.

\begin{defn}
Let $\{x,y\},\{u,v\}\subset\HP$ be two pairs of distinct points.  We say that $\{x,y\}$ and $\{u,v\}$ are {\em SH-related} if there exists a function $m:\R\rightarrow\R$, with $m(0)=0$, such that,
for every $k\in\R$\,:
\begin{equation*}
\SH{k}{x,y}=\SH{m(k)}{u,v}.
\end{equation*}
\end{defn}

In particular, in the generic case, $\E{\vx ,\vy}=\E{\vu,\vv}$ if and only if the following two conditions hold\,:
\begin{itemize}
\item for $i=1,2$, $\{x_i,y_i\}$ and $\{u_i,v_i\}$ are SH-related;
\item if $m$ is the reparametrization in one factor of $\B$, then the reparametrization in the other factor is $k\mapsto -m(-k)$.
\end{itemize}

Observe that if $\{x,y\}$ and $\{u,v\}$ are SH-related and, say, $x=u$, then necessarily, $y=v$. Therefore, SH-related pairs are either equal or disjoint.

\section{Collinearity}\label{sec:collinear}
In this section, we will show that SH-related pairs of points must be collinear.  Our strategy will involve the closest point of a square hyperbola
to the common bisector.

Explicitly, let $x,y\in\HP$; we will show that $y$ is the point belonging to $\SH{L_{x,y}(y)}{x,y}$ which is closest to $\SH{0}{x,y}$.
(Experimentally, we have observed that indeed, for all real $k\neq 0$, the intersection of $\SH{k}{x,y}$ with the geodesic containing $x,y$ is the point which is closest to
$\SH{0}{x,y}$; however it is easier to prove in the particular case where that intersection point is either $x$ or $y$.)

One step in the argument will require a fact mentioned in~\S\ref{sec:prelim}, namely that the square hyperbola $\SH{k}{x,y}$ is symmetric with respect to the geodesic containing $x,y$.

\begin{lemma}\label{lem:hypsymm}%
Let $x,y$ be distinct points in $\HP$ and let $\lambda$ be the geodesic containing $x,y$. Then for every $k\in\R$, the square hyperbola
$\SH{k}{x , y} $ is symmetric with respect to $\lambda$.
\end{lemma}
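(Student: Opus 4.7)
The plan is to exploit the fact that reflection across the geodesic $\lambda$ is an isometry of $\HP$ that fixes the two points $x$ and $y$ pointwise. Any isometry fixing $x$ and $y$ pointwise automatically preserves the distance functions $d(x,\cdot)$ and $d(y,\cdot)$, and hence preserves the level function $L_{x,y}(p)=d^2(x,p)-d^2(y,p)$. Since $\SH{k}{x,y}$ is exactly the level set $\{p : L_{x,y}(p)=k\}$, it must be preserved setwise by this reflection, which is precisely the claim of symmetry with respect to $\lambda$.

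Concretely, first I would invoke the standard fact that for any geodesic $\lambda$ in $\HP$ there is an orientation-reversing isometry $\sigma_\lambda$ of $\HP$ whose fixed point set is exactly $\lambda$; this is the reflection across $\lambda$. Since $x,y\in\lambda$, we have $\sigma_\lambda(x)=x$ and $\sigma_\lambda(y)=y$. Then for any $p\in\HP$, setting $p'=\sigma_\lambda(p)$, the fact that $\sigma_\lambda$ is an isometry gives $d(x,p')=d(\sigma_\lambda(x),\sigma_\lambda(p))=d(x,p)$ and similarly $d(y,p')=d(y,p)$. Squaring and subtracting yields $L_{x,y}(p')=L_{x,y}(p)$, so $p\in\SH{k}{x,y}$ if and only if $p'\in\SH{k}{x,y}$. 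This says exactly that $\sigma_\lambda\bigl(\SH{k}{x,y}\bigr)=\SH{k}{x,y}$, which is the desired symmetry.

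There is no real obstacle here: the lemma is essentially a symmetry principle and follows from two observations that are standard in the hyperbolic plane, namely the existence of the reflection across a geodesic and the fact that isometries preserve distance. The only thing worth being careful about is to state that $\sigma_\lambda$ fixes $x$ and $y$ \emph{pointwise} (not just as a pair), which is what allows the argument on the individual distances $d(x,\cdot)$ and $d(y,\cdot)$ rather than only on the unordered pair. Once that is noted, the proof is essentially a one-line computation.
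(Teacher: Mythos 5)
Your argument is correct and is essentially the same as the paper's: both use the reflection $R$ across $\lambda$, note that it is an isometry fixing $x$ and $y$ pointwise, and conclude that $L_{x,y}\circ R = L_{x,y}$, so each level set $\SH{k}{x,y}$ is preserved. Your added remark about the pointwise (rather than setwise) fixing of $\{x,y\}$ is a reasonable point of care but reflects no difference in method.
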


\begin{proof}
Let $R$ denote the reflection in the line $\lambda$; then $R$ is an
isometry which fixes both $x$ and $y$.  Therefore\,:
$$
\begin{array}{rcl}
 L_{x,y}(R(p)) & =  & \HD^2(x,R(p)) - \HD^2(y, R(p)) \\
  & = &  \HD^2(R(x),R^2(p)) - \HD^2(R(y), R^2(p))\\
 & = &  \HD^2(x,p) - \HD^2(y,p)= L_{x,y}(p)
\end{array}
$$
Thus $R$ maps the square hyperbola $\SH{k}{x,y}$ to itself.
\end{proof}

Our argument centers around a certain \emph{Lambert quadrilateral}.  A Lambert quadrilateral in $\HP$ is a
quadrilateral with three right angles. The angle at the fourth vertex is necessarily
acute.
\begin{lemma}\label{lem:Lambert}
Let $Q$ be a Lambert quadrilateral whose sides meeting at the acute angle have lengths $\da$ and $\dk$, and whose diagonal meeting the acute angle
has length $\db$.  Then\,:
\begin{equation}\label{eq:LambertSinh}
\sinh^2 \da +\sinh^2 \dk  =
\sinh^2 \db
\end{equation}
and
\begin{equation}\label{eq:LambertLengths}
\da^2 + \dk^2 > \db^2 .
\end{equation}
\end{lemma}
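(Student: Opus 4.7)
The plan is to draw the diagonal of $Q$ from the acute vertex, splitting $Q$ into two hyperbolic right triangles whose common hypotenuse has length $\db$.

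Label the vertices of $Q$ cyclically as $A,B,C,D$ with the acute angle at $A$ and right angles at $B,C,D$, so that $|AB|=\da$, $|AD|=\dk$ and the diagonal $AC$ (the one meeting $A$) has length $\db$. This diagonal cuts $Q$ into the right triangle $\triangle ABC$ (right-angled at $B$, with legs $\da$ and $|BC|$ and hypotenuse $\db$) and the right triangle $\triangle ACD$ (right-angled at $D$, with legs $\dk$ and $|CD|$ and hypotenuse $\db$).

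For equation \eqref{eq:LambertSinh}, I apply the hyperbolic law of sines in each of these right triangles. In $\triangle ABC$, the angle $\angle BCA$ is opposite the side $AB$, so $\sin(\angle BCA)=\sinh\da/\sinh\db$, and in $\triangle ACD$, the angle $\angle ACD$ is opposite $AD$, giving $\sin(\angle ACD)=\sinh\dk/\sinh\db$. Because the quadrilateral has a right angle at $C$, we have $\angle BCA+\angle ACD=\pi/2$, so $\sin(\angle ACD)=\cos(\angle BCA)$. Hence
\[
1=\sin^{2}(\angle BCA)+\cos^{2}(\angle BCA)=\frac{\sinh^{2}\da+\sinh^{2}\dk}{\sinh^{2}\db},
\]
which is \eqref{eq:LambertSinh}.

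For inequality \eqref{eq:LambertLengths}, I reduce to the elementary fact that, for $a,k>0$,
\[
\sinh^{2}a+\sinh^{2}k<\sinh^{2}\sqrt{a^{2}+k^{2}}.
\]
Combined with \eqref{eq:LambertSinh} this gives $\sinh^{2}\db<\sinh^{2}\sqrt{\da^{2}+\dk^{2}}$ and hence \eqref{eq:LambertLengths}. To prove the auxiliary inequality I set $f(t)=\sinh^{2}\sqrt{t}$, note $f(0)=0$, and compute
\[
f'(t)=\frac{\sinh(2\sqrt{t})}{2\sqrt{t}}=\sum_{n\geq 0}\frac{4^{n}t^{n}}{(2n+1)!}=1+\frac{2t}{3}+\frac{2t^{2}}{15}+\cdots,
\]
which is strictly increasing in $t\geq 0$, so $f$ is strictly convex on $[0,\infty)$. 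Strict convexity together with $f(0)=0$ yields strict super-additivity $f(a^{2})+f(k^{2})<f(a^{2}+k^{2})$ by the standard convex-combination argument, finishing the proof.

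The only mildly technical step is verifying strict convexity of $\sinh^{2}\sqrt{t}$; the power-series expansion above makes this transparent, so I expect no serious obstacle.
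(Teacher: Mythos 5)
Your proof is correct. For \eqref{eq:LambertSinh} it is the same argument as the paper's: cut $Q$ along the diagonal from the acute vertex into two right triangles with common hypotenuse of length $\db$, apply the hyperbolic law of sines in each, and use that the two angles at the opposite (right-angled) vertex sum to $\pi/2$ so that $\sin^2+\cos^2=1$ assembles the identity. For \eqref{eq:LambertLengths} your route differs in packaging: the paper argues by contradiction, expanding $\sinh^2$ in its Maclaurin series and comparing term by term via $(\da^2+\dk^2)^m>\da^{2m}+\dk^{2m}$, whereas you prove directly that $t\mapsto\sinh^2\sqrt{t}$ is strictly convex with value $0$ at $0$, hence strictly superadditive, which gives $\sinh^2\db<\sinh^2\sqrt{\da^2+\dk^2}$ and so $\db<\sqrt{\da^2+\dk^2}$. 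The underlying fact is the same superadditivity phenomenon (the paper's term-wise inequality is exactly superadditivity of $t\mapsto t^m$), but your convexity formulation is a clean direct proof rather than a proof by contradiction; just note explicitly that strictness requires $\da,\dk>0$, which holds for a nondegenerate quadrilateral.
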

\begin{proof}
Let $a,k$ respectively denote the sides of length $\da,~\dk$.  The diagonal, denoted $b$ in Figure~\ref{fig:Lambert}, divides $Q$ into two right triangles\,: $\Delta_a$,
containing $a$, and $\Delta_k$, containing $k$.  Let $\alpha$ be the angle at the vertex of $\Delta_a$, opposite to $a$.  Then the angle of the vertex of $\Delta_k$,
opposite to $k$, is $\pi/2-\alpha$.  (See Figure~\ref{fig:Lambert}.)
\begin{figure}
\begin{center}
   \ps{a}[][][1.0]{$a$}
   \ps{b}[][][1.0]{$b$}
   \ps{k}[][][1.0]{$k$}
   \ps{aa}[][][1.0]{$\alpha$}
   \ps{pa}[][][1.0]{$\frac{\pi}{2}-\alpha $}
\includegraphics[height=4cm]{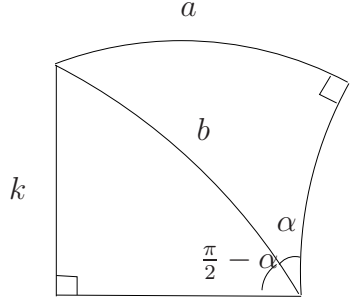}
\caption{A Lambert quadrilateral. The angle at the vertex adjacent to $a$ and $k$ is acute.  The diagonal is denoted $b$. }
\label{fig:Lambert}
\end{center}
\end{figure}

Applying the hyperbolic law of sines to $\Delta_a$ and $\Delta_k$\,:

\begin{align*}
\sinh \db & = \frac{\sinh \dk }{\sin (\alpha )}  \\
\sinh \db & = \frac{\sinh \da }{\sin (\pi/2-\alpha )}.
\end{align*}

Therefore\,:
\begin{align*}
\sinh^2 \dk + \sinh^2 \da  &= \left( \sin^2 (\alpha ) + \sin^2 (\pi/2-\alpha ) \right)  \sinh^2 \db  \\
 &=\sinh^2 \db .
\end{align*}
This proves \eqref{eq:LambertSinh}.

To prove \eqref{eq:LambertLengths}, note that
the Maclaurin Series for $\sinh^2 x$ is\,:
\[
\sinh^2 x = \sum_{m =1}^{\infty} c_{2m} x^{2m}
\]
where $c_{2m}=\frac{2^{2m-2}}{(2m)!} >0$.

Substituting \eqref{eq:LambertSinh} into the Maclaurin Series above, we get\,:
\[
\sum_{m =1}^{\infty} c_{2m} \left( \db^{2m} - \dk^{2m} - \da^{2m} \right) = 0.
\]

Now, suppose that $\db^2 \geq \da^2 + \dk^2$. Raising both sides to the power $m >1$\,:
\[
\db^{2m} \geq \left(  \da^2 + \dk^2 \right)^m >  \da^{2m} + \dk^{2m}.
 \]
The second inequality is strict, implying\,:
\[
\sum_{m =1}^{\infty} c_{2m} \left( \db^{2m} - \dk^{2m} - \da^{2m} \right) > 0
\]
which is a contradiction.
Therefore, \eqref{eq:LambertLengths} holds.
\end{proof}

\begin{lemma}\label{lem:closestpoint}%
Let $x,y$ be distinct points in $\HP$. Then $y$ is the point on the square hyperbola $\SH{L_{x,y}(y)}{x,y}$ which is closest to the bisector
$\SH{0}{x , y}$.
\end{lemma}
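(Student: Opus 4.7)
I plan to reduce the lemma to a single scalar inequality. Let $\lambda$ denote the geodesic through $x$ and $y$, let $z$ be their midpoint (so $z\in\lambda\cap\SH{0}{x,y}$), and set $D=d(x,y)$. For any $p\in\HP$, let $p_\lambda$ and $p_0$ be the feet of the perpendiculars from $p$ onto $\lambda$ and $\SH{0}{x,y}$ respectively; let $h=d(p,p_\lambda)$, $\delta_0=d(p,p_0)=d(p,\SH{0}{x,y})$, and let $s$ be the signed distance from $z$ to $p_\lambda$, taken positive toward $y$. Since $L_{x,y}(y)=D^2$ and $d(y,\SH{0}{x,y})=D/2$, the statement reduces to showing that $\delta_0\geq D/2$ for every $p\in\SH{D^2}{x,y}$, with equality only when $p=y$.

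Two geometric identities drive the argument. Hyperbolic Pythagoras in the right triangles $x\,p_\lambda\,p$ and $y\,p_\lambda\,p$ (both with right angle at $p_\lambda$) gives
\[
\cosh d(p,x)=\cosh h\cosh(s+D/2),\qquad \cosh d(p,y)=\cosh h\cosh(s-D/2).
\]
Meanwhile, $p\,p_\lambda\,z\,p_0$ is a Lambert quadrilateral with acute angle at $p$; combining Lemma~\ref{lem:Lambert} (specifically equation~\eqref{eq:LambertSinh}) with hyperbolic Pythagoras in $\triangle p\,p_\lambda\,z$ (which gives $\cosh d(p,z)=\cosh h\cosh|s|$) yields, after simplification using $\cosh^2-\sinh^2=1$,
\[
\sinh\delta_0=\cosh h\sinh|s|.
\]

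For $p\in\SH{D^2}{x,y}$ the inequality $d(p,x)>d(p,y)$ forces $s>0$. Subtracting the two Pythagoras formulas and applying $\cosh A-\cosh B=2\sinh\tfrac{A+B}{2}\sinh\tfrac{A-B}{2}$ on both sides produces the key identity
\[
\sinh\!\Bigl(\tfrac{d(p,x)+d(p,y)}{2}\Bigr)\sinh\!\Bigl(\tfrac{d(p,x)-d(p,y)}{2}\Bigr)=\sinh(D/2)\sinh\delta_0.
\]
The square-hyperbola condition $d(p,x)^2-d(p,y)^2=D^2$ reads $uv=(D/2)^2$ after writing $u=(d(p,x)+d(p,y))/2$ and $v=(d(p,x)-d(p,y))/2$.

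The lemma is therefore reduced to the following scalar claim, which I expect to be the main obstacle: if $u,v>0$ satisfy $uv=c^2$, then $\sinh u\sinh v\geq \sinh^2 c$, with equality iff $u=v=c$. I would prove this by writing $2\sinh u\sinh v=\cosh(u+v)-\cosh(u-v)$, setting $t=u-v\geq 0$ so that $u+v=\sqrt{t^2+4c^2}$, and showing that $t\mapsto\cosh\sqrt{t^2+4c^2}-\cosh t$ is strictly increasing on $[0,\infty)$; the derivative test reduces this in turn to the standard monotonicity of $y\mapsto\sinh(y)/y$ on $(0,\infty)$, which is elementary. Applied with $c=D/2$, this yields $\sinh\delta_0\geq\sinh(D/2)$, hence $\delta_0\geq D/2$, with equality forcing $u=v=D/2$, i.e.\ $d(p,y)=0$ and $p=y$.
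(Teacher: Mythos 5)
Your proof is correct, and it takes a genuinely different route from the paper's. The paper fixes the \emph{equidistant curve} at distance $\dk=d(x,y)/2$ from $\SH{0}{x,y}$ --- the orbit $g_t(y)$ under the translations along the spine geodesic $\SH{0}{x,y}$ --- and shows $L_{x,y}(g_t(y))<4\dk^2$ for $t\neq 0$, the engine being the Lambert-quadrilateral inequality \eqref{eq:LambertLengths}, $\da^2+\dk^2>\db^2$, applied to the quadrilateral with vertices $y$, $w$, $g_t(w)$ and the midpoint of $y$ and $g_{2t}(y)$. You instead parametrize the \emph{level set} $\SH{L_{x,y}(y)}{x,y}$ itself and bound its distance to the bisector from below, reducing everything to the identity $\sinh u\sinh v=\sinh(D/2)\sinh\delta_0$ subject to the constraint $uv=(D/2)^2$, and then to the elementary fact that $\sinh u\sinh v$ is minimized at $u=v$ along the hyperbola $uv=c^2$ (which, via $2\sinh u\sinh v=\cosh(u+v)-\cosh(u-v)$, follows from the monotonicity of $\sinh(y)/y$; your sketch of this step is adequate). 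Both arguments ultimately exploit the same tension between a $\sinh$-identity and its Euclidean shadow --- your scalar lemma plays exactly the role of \eqref{eq:LambertLengths} --- but your version has two advantages: it uses only the point-to-geodesic distance relation $\sinh\delta_0=\cosh h\,\sinh|s|$ (i.e.\ essentially \eqref{eq:LambertSinh}), not the inequality \eqref{eq:LambertLengths}; and because it works directly on the level set, it delivers minimality \emph{and} uniqueness of $y$ in one stroke, whereas the paper's statement about the equidistant curve still needs an implicit conversion step to become a statement about the level set.
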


\begin{proof}
Let $\dk=d(x,y)/2$, so that $\dk$ is the distance between $y$ and the bisector $\SH{0}{x , y}$.  Let $g_t$ be the one-parameter subgroup of
isometries whose invariant line is $\SH{0}{x,y}$.  Since the curve $g_t(y)$ is equidistant
from the bisector,  it suffices to show that for every real $t\neq 0$\,:
\begin{equation*}
L_{x,y}(g_t(y))<4\dk^2.
\end{equation*}
Indeed, this will imply that for every point of $\SH{L_{x,y}(y)}{x,y}$ other than $y$, the distance from the bisector is greater than $4\dk^2$.

Denote by $\lambda$ the geodesic containing $x,y$.  Let $w=\lambda\cap\SH{0}{x,y}$.  Thus $w$ is the midpoint between $x$ and $y$ and\,:
\begin{equation*}
d(y,w)=\dk.
\end{equation*}

Let $t\neq 0$ be an arbitrary real number.  Set $\da$ to be the number such that\,:
\begin{equation*}
d(y,g_{2t}(y))=2\da.
\end{equation*}

Let $\mu$ be the geodesic containing $g_t(w)$ which is perpendicular to $\SH{0}{x,y}$.  Since reflection in $\mu$ leaves this bisector invariant, and maps $y$ to $g_{2t}(y)$ (and vice versa),
it also leaves invariant the geodesic segment between $y$ and $g_{2t}(y)$.  Set $z$ to be the intersection of this geodesic segment and $\mu$.  Then,
$y,w,g_t(w),z$ form a Lambert quadrilateral, with acute angle at the vertex $y$, and\,:
\begin{equation*}
d(y,z)=\da.
\end{equation*}
See Figure~\ref{fig:close1}.
\begin{figure}
\begin{center}
   \ps{y}[][][0.8]{$y$}
   \ps{K}[][][0.8]{$K$}
   \ps{z}[][][0.8]{$w$}
   \ps{x}[][][0.8]{$x$}
   \ps{A}[][][0.8]{$A$}
   \ps{B}[][][0.8]{$ $}
   \ps{s}[][][0.8]{$g_t(w)$}
   \ps{t}[][][0.8]{$z$}
   \ps{u}[][][0.8]{$ $}
   \ps{q}[][][0.8]{$ $}
   \ps{r}[][][0.8]{$g_{2t}(w)$}
   \ps{p}[][][0.8]{$g_{2t}(y)$}
   \ps{m}[][][0.8]{$\mu$}
   \ps{l}[][][0.8]{$SH_0(x,y)$}
\includegraphics[height=8cm]{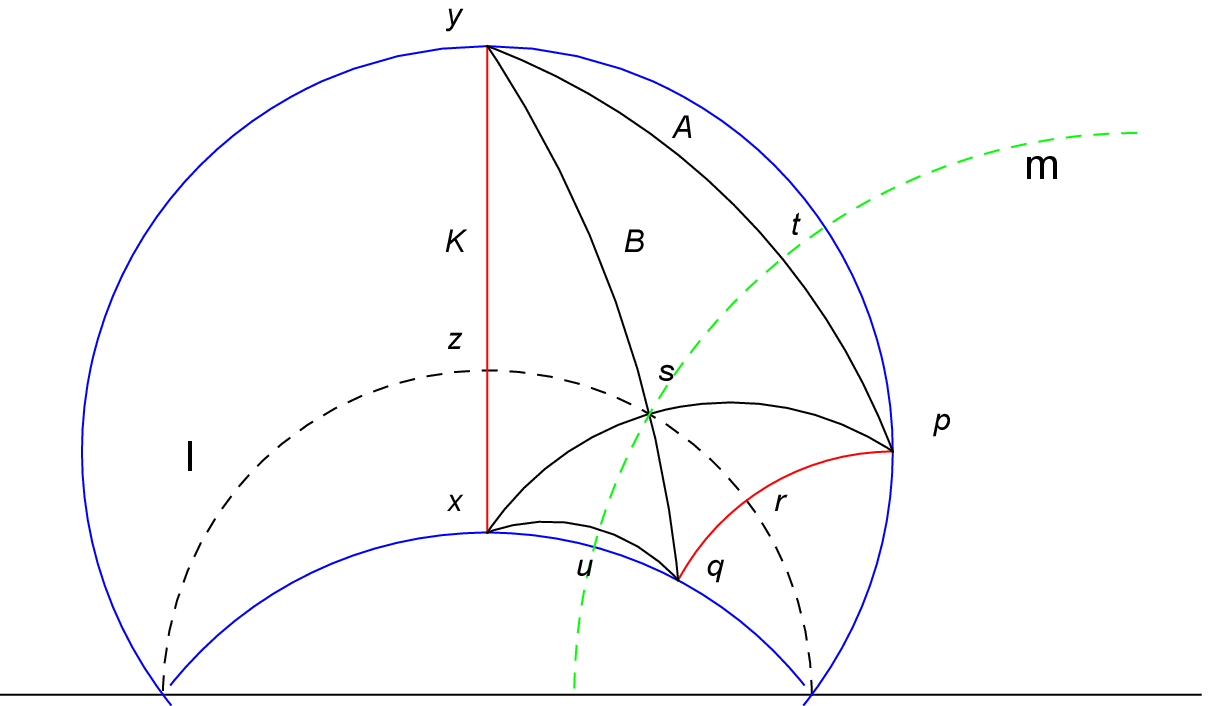}
\caption{The point $z$ is the midpoint between $y$ and $g_{2t}(y)$, by construction. }
\label{fig:close1}
\end{center}
\end{figure}

Finally, set $\db$ to be the number such that\,:
\begin{equation*}
d(x,g_{2t}(y))=2\db.
\end{equation*}
We claim that\,:
\begin{equation*}
d(y,g_{t}(w))=\db.
\end{equation*}
If the claim is true, then applying Lemma~\ref{lem:Lambert} to the Lambert quadrilateral above, we obtain\,:
\begin{equation*}
\dk^2>\db^2-\da^2
\end{equation*}
yielding the Lemma.  To prove the claim, observe that\,:
\begin{itemize}
\item reflection in $\mu$ maps the line segment between $g_t(w)$ and $g_{2t}(y)$ to the line segment between $g_t(w)$ and $y$;
\item reflection in $\SH{0}{x,y}$, composed with reflection in $\mu$, maps the line segment between $g_t(w)$ and $g_{2t}(y)$ and
the line segment between $g_t(w)$ and $x$;
\item $x, g_t(w), g_{2t}(y)$ must be collinear, since isometries preserve angles.
\end{itemize}
Thus $d(y,g_{t}(w))=d(g_{2t}(y),x)/2=\db$ as claimed.
\end{proof}

Finally, we are ready to prove that SH-related pairs must be collinear.
\begin{prop}
Let $x,y,u,v\in\HP$ be distinct points such that $\{x,y\}$ and $\{u, v\}$ are SH-related.  Then $x,y,u,v$ are collinear.
\end{prop}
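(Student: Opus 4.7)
The plan is to combine the reflective symmetry of the square hyperbolae (Lemma~\ref{lem:hypsymm}) with the uniqueness of the closest point (Lemma~\ref{lem:closestpoint}) to force the two defining geodesics to coincide.

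Let $\lambda_{xy}$ be the geodesic through $x,y$ and $\lambda_{uv}$ that through $u,v$. Since $m(0)=0$, the SH-relation gives $\SH{0}{x,y}=\SH{0}{u,v}$; call this common perpendicular bisector $\lambda_{0}$. Both $\lambda_{xy}$ and $\lambda_{uv}$ meet $\lambda_{0}$ orthogonally at their respective midpoints, so they are either equal or ultraparallel with $\lambda_{0}$ as their common perpendicular.

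The key bridge between the two foliations is the following observation: the $\{x,y\}$-leaf through $v$ is $\SH{L_{x,y}(v)}{x,y}$, and by the SH-related hypothesis it must coincide with the $\{u,v\}$-leaf through $v$, which is $\SH{d(u,v)^2}{u,v}$. Hence, by Lemma~\ref{lem:hypsymm} applied in the $\{x,y\}$-family, reflection $R_{xy}$ across $\lambda_{xy}$ preserves the hyperbola $\SH{d(u,v)^2}{u,v}$. Of course, reflection $R_{uv}$ across $\lambda_{uv}$ preserves $\SH{d(u,v)^2}{u,v}$ by direct application of Lemma~\ref{lem:hypsymm} in the $\{u,v\}$-family.

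I then conclude by contradiction. Suppose $\lambda_{xy}\neq\lambda_{uv}$. Then $R_{xy}\circ R_{uv}$ is a non-trivial hyperbolic translation along $\lambda_{0}$, and by the previous step it preserves $\SH{d(u,v)^2}{u,v}$. Any translation along $\lambda_{0}$ preserves distances to $\lambda_{0}$, so it must send closest points of $\SH{d(u,v)^2}{u,v}$ to closest points; but Lemma~\ref{lem:closestpoint} identifies this closest point as uniquely $v$, forcing the translation to fix $v$. No non-trivial hyperbolic translation of $\HP$ has a fixed point, a contradiction. Therefore $\lambda_{xy}=\lambda_{uv}$, and all four points $x,y,u,v$ lie on this common geodesic, as required.

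The main point to be careful about is the bridge step: showing that the rotational symmetry of the $\{x,y\}$-family transfers exactly onto the specific $\{u,v\}$-leaf on which Lemma~\ref{lem:closestpoint} pins down $v$. Once that identification is in place, the rigidity of hyperbolic translations and the uniqueness clause of the closest-point lemma do the rest with no further computation.
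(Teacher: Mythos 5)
Your proof is correct and rests on the same two ingredients as the paper's own argument: the common leaf through one of the four points is symmetric about both defining geodesics (Lemma~\ref{lem:hypsymm}), and the uniqueness of its closest point to the shared $\SH{0}{\cdot}$ (Lemma~\ref{lem:closestpoint}) forces the two axes of symmetry to coincide. The paper phrases this directly (a symmetric hyperbola with a unique closest point must have that point on its axis, applied to $y$), while you package the same idea as a fixed-point contradiction for the translation $R_{xy}\circ R_{uv}$ applied to $v$; this is only a cosmetic difference.
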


\begin{proof}
Let $\lambda$ be the geodesic containing and $x,y$ and set $\dk = d(x,y)/2$. Recall that $\SH{0}{x,y} = \SH{0}{u,v}$.
By  Lemma~\ref{lem:closestpoint}, the point $y$ is the
unique point on the square hyperbola
$\SH{4\dk^2}{x,y}$ that is closest to $\SH{0}{x,y}$.

By hypothesis, there exists $m\in\R$ such that $\SH{m}{u,v} =\SH{4\dk^2}{x,y}$.

Now let $\mu$ be the geodesic containing $u,v$.  By Lemma~\ref{lem:hypsymm}, the square hyperbola $\SH{m}{u,v}$ is
symmetric with respect to $\mu$.  Therefore, except for the point at the intersection of $\mu$ and $\SH{m}{u,v}$, there are always two points on $\SH{m}{u,v}$ which are at equal distance
from $\SH{0}{u,v}$.

Since $y$ is the unique point on $\SH{m}{u,v}$ at distance $\dk$ from $\SH{0}{u,v}$, it follows that $y\in\mu$ and thus $\mu=\lambda$.
\end{proof}

\section{Growth of quotients of level functions for square hyperbolae}\label{sec:SH}

We have shown that, in order for two bisectors to be equal, their leaves must be the same and the coordinates of the points must be collinear in each factor. To complete the proof of Theorem~\ref{thm:main}, we undertake a careful examination of the square hyperbolae in one factor of the bidisk, assuming collinearity of the coordinates.
We will show that if collinear pairs of points are distinct, then they cannot be SH-related. Thus, by Lemma~\ref{lem:samehyperbolae}
the bisectors of different pairs of points must be different.

%
%
 %
%

Given an ordered set $\Omega=\{x, y, u, v\}\subset\HP$, the following function is well-defined\,:
\begin{equation}\label{eq:quotient}
\begin{split}
\Phi_\Omega:\HP\setminus\SH{0}{u,v} & \longrightarrow \R \\
 p & \longmapsto \frac{L_{x,y}(p)}{L_{u,v}(p)}.
\end{split}
\end{equation}

\begin{lemma}
 Let $\Omega=\{x, y, u, v\}\subset\HP$ be an ordered set of distinct points which are collinear. Assume furthermore that $\{x,y\}$ and $\{u,v\}$ are SH-related.
 Then $\Phi_\Omega$ is constant.
\end{lemma}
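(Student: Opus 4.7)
My plan is to show that $\Phi_\Omega$ is constant first by observing that it is constant on each leaf $\SH{k}{x,y}$, and then by a direct calculation constant along the common geodesic $\lambda$ containing all four points; combining these will pin $\Phi_\Omega$ to a single value on $\HP\setminus\SH{0}{u,v}$.

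First, I would exploit the SH-relation itself. Pick $p\in\HP\setminus\SH{0}{u,v}$ and set $k=L_{x,y}(p)$. Since $\SH{0}{x,y}=\SH{0}{u,v}$ (as $m(0)=0$), we have $k\neq 0$. The SH-relation gives $\SH{k}{x,y}=\SH{m(k)}{u,v}$, so $L_{u,v}(p)=m(k)$, and $m(k)\neq 0$ (otherwise the set $\SH{k}{x,y}=\SH{0}{u,v}=\SH{0}{x,y}$ would force $k=0$). Thus $\Phi_\Omega(p)=k/m(k)$ depends only on $k=L_{x,y}(p)$, so $\Phi_\Omega$ is constant on each level set $\SH{k}{x,y}$.

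Next, I would compute $\Phi_\Omega$ directly along the geodesic $\lambda$ containing $x,y,u,v$. Parametrize $\lambda$ by arc length with coordinates $x_0,y_0,u_0,v_0$ for the four points. For a point $p\in\lambda$ with parameter $t$, distances along $\lambda$ are absolute differences, so
\begin{equation*}
L_{x,y}(p)=(t-x_0)^2-(t-y_0)^2=(y_0-x_0)\bigl(2t-(x_0+y_0)\bigr),
\end{equation*}
and analogously for $L_{u,v}$. Here I need the key geometric fact that the midpoints of $\{x,y\}$ and of $\{u,v\}$ coincide on $\lambda$. This holds because $\SH{0}{x,y}=\SH{0}{u,v}$, and each spine meets $\lambda$ orthogonally at the midpoint of its defining pair (a symmetry argument using reflection in the spine, which fixes $\lambda$ setwise and swaps the two points). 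Hence $x_0+y_0=u_0+v_0$, and the ratio simplifies to the constant $(y_0-x_0)/(v_0-u_0)$, independent of $t$, on all of $\lambda\setminus\SH{0}{u,v}$.

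Finally, I would glue the two steps: for any $p\in\HP\setminus\SH{0}{u,v}$, set $k=L_{x,y}(p)\neq 0$. On $\lambda$, the function $t\mapsto L_{x,y}(p(t))$ is affine and surjective onto $\R$, so $\SH{k}{x,y}\cap\lambda$ is nonempty; pick $p^\ast$ in this intersection. By the first step $\Phi_\Omega(p)=\Phi_\Omega(p^\ast)$, and by the second step $\Phi_\Omega(p^\ast)=(y_0-x_0)/(v_0-u_0)$. Therefore $\Phi_\Omega$ is identically equal to this constant. The only subtle point is the coincidence of midpoints and the perpendicularity of the spine to $\lambda$, but both are consequences of Lemma~\ref{lem:hypsymm} together with the fact that reflection in the spine swaps the two defining points; once this is in hand the argument is routine.
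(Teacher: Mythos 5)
Your proposal is correct and follows essentially the same route as the paper's proof: a direct computation showing $\Phi_\Omega$ is constant along the common geodesic $\lambda$ (using that the spines, hence the midpoints, coincide), combined with the observation that the SH-relation forces $\Phi_\Omega$ to be constant on each level set $\SH{k}{x,y}$, which meets $\lambda$. Your signed arc-length parametrization is a slightly cleaner way to handle both sides of the spine at once, but the argument is the same.
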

\begin{proof}
Let $\lambda$ be the line containing $ x, y, u, v$, and $w$, the midpoint between $x$ and $y$ (or $u$ and $v$).
Relabeling the points if necessary, we
may assume that $x$ and $u$ lie on the same side of $\SH{0}{x,y} =\SH{0}{u,v}$.

Consider any point $p \in\lambda$ which does not belong to $\SH{0}{x,y}$, and on the side
opposite of $\SH{0}{x,y}$ from $x$ and $u$.
Set $\da = \HD(x,w) = \HD(y,w)$, $\db= \HD(u,w)=\HD(v,w)$ and $\dc=\HD(p,w)$.
The following holds:
\[
 \frac{L_{ x,y }(p)}{L_{ u,v }(p)} = \frac{(\da+\dc)^2 - (\da-\dc)^2}{(\dc+\db)^2 - (\dc-\db)^2}
 = \frac{4\da\dc}{4\dc\db}=\frac{\da}{\db}.
\]
We obtain the same value for $p$ on the same side of $\SH{0}{x,y}$
as $x$ and $u$.  Thus the value of the function $\Phi_\Omega$ is constant for all
$p\in \lambda \setminus \SH{0}{x,y}$.

Consider a point $w\not\in \SH{0}{u,v}$, and let $k=L_{x,y}(w)$.  Observe that $L_{x,y}(w)=L_{x,y}(p)$, where
$p=\SH{k}{x,y}\cap\lambda$.  Now since $\{x,y\}$ and $\{u,v\}$ are SH-related, we have\,:
\begin{equation*}
\SH{k}{x,y}=\SH{m}{u,v}
\end{equation*}
for some $m\in\R$.  Thus $L_{u,v}(w)=L_{u,v}(p)$ as well, and\,:
\begin{align*}
\Phi_\Omega(w) & = \frac{L_{x,y}(w)}{L_{u,v}(w)} \\
& = \frac{L_{x,y}(p)}{L_{u,v}(p)} \\
& = \Phi_\Omega(p).
\end{align*}
Therefore, the value of $\Phi_\Omega$ is constant.
\end{proof}

For the remainder of this section, assume that $\Omega=\{x,y,u,v\}\subset\HP$ is an ordered set of collinear points, such that the pairs $\{x,y\}$ and $\{u,v\}$ share
 a common bisector,
yet such that $\{x,y\}\neq\{u,v\}$.  We will show that in this case, $\Phi_\Omega$ fails to be constant.  Consequently, by the preceding lemma, $\{x,y\}$
and $\{u,v\}$ are not SH-related.

Recall the following standard fact.
\begin{prop}[Hyperbolic Pythagorean Theorem]
Given a  right triangle in $\HP$ with
side lengths $\da , \db , \dc$, we have\,:
\[
\cosh \da \cosh \db = \cosh \dc .
\]
\end{prop}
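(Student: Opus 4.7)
The plan is to reduce the identity to a direct computation in the hyperboloid model of $\HP$. I realize $\HP$ as the upper sheet of $\{(x_0,x_1,x_2)\in\R^3 : x_0^2 - x_1^2 - x_2^2 = 1,\ x_0>0\}$, equipped with the restriction of the Minkowski bilinear form $\langle u,v\rangle = u_0v_0 - u_1v_1 - u_2v_2$. In this model hyperbolic distance satisfies $\cosh\HD(p,q) = \langle p,q\rangle$, geodesics are the intersections of the hyperboloid with linear $2$-planes through the origin, and the isometry group acts transitively on orthonormal frames in the tangent bundle.

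Using this last property, I would first normalize the triangle. After applying an ambient isometry, the right-angle vertex is at $P_0 = (1,0,0)$, the leg of length $\da$ is carried into the geodesic $\{x_2 = 0\}$, and the leg of length $\db$ into the geodesic $\{x_1 = 0\}$. These two geodesics meet orthogonally at $P_0$, so this normalization is always achievable. Parametrizing each leg by arc length from $P_0$, the other two vertices of the triangle become
\[
P_1 = (\cosh\da,\,\sinh\da,\,0), \qquad P_2 = (\cosh\db,\,0,\,\sinh\db).
\]

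Evaluating the Minkowski form then immediately gives
\[
\cosh\dc \;=\; \langle P_1, P_2\rangle \;=\; \cosh\da\cosh\db,
\]
which is the claim. The only step that requires any thought is the normalization, and that is routine from the transitivity of the isometry group on orthonormal frames; no estimates or case analyses arise. A slicker-sounding alternative would be to quote the hyperbolic law of cosines $\cosh\dc = \cosh\da\cosh\db - \sinh\da\sinh\db\cos\gamma$ and specialize to $\gamma = \pi/2$, but this merely defers the same computation to the derivation of the law of cosines itself, so the hyperboloid route seems the cleanest.
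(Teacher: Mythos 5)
Your proof is correct: the normalization to a right angle at $(1,0,0)$ with legs along the coordinate geodesics is legitimate by transitivity of the isometry group on orthonormal frames, the arc-length parametrizations $P_1=(\cosh\da,\sinh\da,0)$ and $P_2=(\cosh\db,0,\sinh\db)$ are right, and the Minkowski pairing $\langle P_1,P_2\rangle=\cosh\da\cosh\db$ gives the identity immediately. There is nothing to compare against on the paper's side, since the paper merely recalls this proposition as a standard fact and supplies no proof of its own; your hyperboloid-model computation is a clean and standard way to establish it, and your remark that specializing the hyperbolic law of cosines at $\gamma=\pi/2$ would only push the same computation elsewhere is a fair assessment.
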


Let $p\in\HP\setminus\SH{0}{x,y}$, and  let $\mu$
be the geodesic containing $p$
which is perpendicular to $\lambda$. Let $w= \lambda \cap \mu$.
 There are four right triangles with one common side $\overline{wp}$ and
another side lying along
$\lambda$ with vertices $x,y,u$ and $v$, respectively.  Set
$\da = \rho (w,x)$, $\db= \rho (w,y)$, $\dc = \rho (w,u)$,
$\dd= \rho (w,v)$ and $\dk= \rho(p,w)$.

\begin{figure}
\begin{center}
   \ps{v}[][][0.8]{$v$}
   \ps{w}[][][0.8]{$w$}
   \ps{y}[][][0.8]{$y$}
   \ps{p}[][][0.8]{$p_K$}
   \ps{l}[][][0.8]{$\lambda$}
   \ps{m}[][][0.8]{$\mu$}
   \ps{u}[][][0.8]{$u$}
   \ps{x}[][][0.8]{$x$}
\includegraphics[height=6cm]{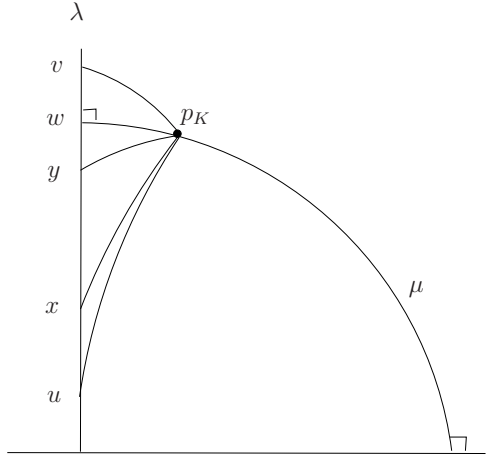}
\caption{Two pairs of collinear points. The point $p_K$ lies on a geodesic which is
perpendicular to $\lambda$, at a point $w$ which is at least as close to $v$ as it is to $y$.}
\label{fig:Expansion}
\end{center}
\end{figure}

The value of the level function is\,:
\[
L_{(x,y)}(p) = \left(  \arccosh  \left(\cosh \rho(p,x) \right) \right)^2 -
\left( \arccosh \left( \cosh \rho(p,y) \right) \right)^2
\]
and can be rewritten, using the Hyperbolic Pythagorean Theorem as\,:
\begin{equation}\label{eq:PythLevels}
L_{(x,y)}(p)  = \left(  \arccosh  \left(\cosh \da \cosh \dk \right) \right)^2 -
\left( \arccosh \left( \cosh \db \cosh \dk \right) \right)^2.
\end{equation}

Consider each of the two terms in \eqref{eq:PythLevels}\,:
\[
f(s,t) = \left( \arccosh ( \cosh s \cosh t ) \right)^2
\]
and expand $f(s , t )$  in the variable $s$ around $s=0$ to get\,:
\begin{equation}\label{eq:Taylor}
 f(s , t ) = t^2 + \left(t \coth t \right) s^2 + O( s^4).
\end{equation}

\begin{prop}
 Let $\Omega=\{x,y,u,v\}\subset\HP$ be an ordered set of distinct, collinear points, such that $\SH{0}{x,y}=\SH{0}{u,v}$.  Then $\Phi_\Omega$ is not constant.
\end{prop}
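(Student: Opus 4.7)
The plan is to compute $\Phi_\Omega(p)$ for a carefully chosen point $p$ off the geodesic $\lambda$ and contradict the value $\ga/\gb$ that $\Phi_\Omega$ takes along $\lambda$ (by the previous lemma). Since $\SH{0}{x,y}=\SH{0}{u,v}$, the pairs $\{x,y\}$ and $\{u,v\}$ share a common midpoint $w_0 \in \lambda$. Set $\ga = \rho(w_0,x) = \rho(w_0,y)$ and $\gb = \rho(w_0,u) = \rho(w_0,v)$; by the hypothesis $\{x,y\} \neq \{u,v\}$ we may assume $\ga > \gb > 0$. For a small $s \in (0,\gb)$, let $w \in \lambda$ lie at signed distance $s$ from $w_0$ on the side of $y$ and $v$, and let $p_K$ be the point on the geodesic perpendicular to $\lambda$ at $w$ at height $K > 0$. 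Then $\da = \ga+s$, $\db = \ga-s$, $\dc = \gb+s$, $\dd = \gb-s$.

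Inserting this into \eqref{eq:PythLevels} and applying \eqref{eq:Taylor} after using the symmetry of $f$ to expand in its second argument,
\[
 f(\sigma,K) = \sigma^2 + (\sigma \coth \sigma) K^2 + O(K^4),
\]
we find, with $h(\sigma) := \sigma \coth \sigma$,
\begin{align*}
 L_{x,y}(p_K) &= 4\ga s + \bigl[h(\ga+s) - h(\ga-s)\bigr] K^2 + O(K^4), \\
 L_{u,v}(p_K) &= 4\gb s + \bigl[h(\gb+s) - h(\gb-s)\bigr] K^2 + O(K^4).
\end{align*}
Taking the ratio,
\[
 \Phi_\Omega(p_K) - \frac{\ga}{\gb} = \frac{K^2}{4\gb s}\Bigl[h(\ga+s) - h(\ga-s) - \tfrac{\ga}{\gb}\bigl(h(\gb+s) - h(\gb-s)\bigr)\Bigr] + O(K^4).
\]
If $\Phi_\Omega$ were constant the bracket would have to vanish for every small $s > 0$; dividing by $s$ and letting $s \to 0^+$ then forces $h'(\ga)/\ga = h'(\gb)/\gb$.

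It thus suffices to show that $\phi(\sigma) := h'(\sigma)/\sigma$ is injective on $(0,\infty)$, since $\ga \neq \gb$ would then yield the desired contradiction. We anticipate this to be the main obstacle. A direct route is to rewrite
\[ \phi(\sigma) = \frac{\sinh(2\sigma) - 2\sigma}{2\sigma \sinh^2 \sigma}, \]
observe that $\phi(0^+) = 2/3$ and $\phi(\infty) = 0$, and verify that $\phi' < 0$ on $(0,\infty)$ by reducing to an elementary hyperbolic inequality whose Taylor series at $0$ begins with $(8/45)\sigma^6$ and which grows like $e^{3\sigma}/8$ at infinity, precluding sign changes. An alternative that sidesteps the monotonicity check altogether is to replace the small-$K$ expansion with the large-$K$ asymptotic $\arccosh(\cosh \sigma \cosh K) = K + \ln \cosh \sigma + O(e^{-2K})$; the same setup then yields
\[ \lim_{K \to \infty} \Phi_\Omega(p_K) = \frac{\ln[\cosh(\ga+s)/\cosh(\ga-s)]}{\ln[\cosh(\gb+s)/\cosh(\gb-s)]}, \]
and equating this with $\ga/\gb$ and sending $s \to 0^+$ reduces to $\tanh(\ga)/\ga = \tanh(\gb)/\gb$, contradicted by the classical strict monotonicity of $\tanh(\sigma)/\sigma$ on $(0,\infty)$.
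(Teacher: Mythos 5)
Your proof is correct, and although it starts from the same expansion \eqref{eq:Taylor} as the paper, the reduction is genuinely different. The paper fixes a single foot point $w$ with $\HD(x,w)>\HD(y,w)\geq\HD(v,w)$ and argues that $\Phi_\Omega(p_{\dk})$ decreases in $\dk$ because $g(\sigma)=\sigma\coth\sigma$ is increasing; as written this only compares the $\dk^2$-coefficients of numerator and denominator, whereas decreasingness of $\frac{A+a\dk^2+O(\dk^4)}{B+b\dk^2+O(\dk^4)}$ requires $aB<bA$, i.e.\ a comparison of the \emph{normalized} coefficients $a/A$ and $b/B$ (here $0<A<B$, so $a<b$ alone does not suffice). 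You instead symmetrize about the common midpoint $w_0$, introduce the offset $s$, and let $s\to0^+$, which separates variables cleanly and reduces constancy of $\Phi_\Omega$ to injectivity of $\sigma\mapsto h'(\sigma)/\sigma$ with $h(\sigma)=\sigma\coth\sigma$ --- a self-contained single-variable statement. That statement is true, and your reduction is correct: $\phi'<0$ is equivalent to $E(\sigma)=\sinh^2\sigma\cosh\sigma+\sigma\sinh\sigma-2\sigma^2\cosh\sigma>0$, whose series is $\sum_{n\geq3}\frac{9^n-32n^2+24n-1}{4(2n)!}\sigma^{2n}$ with all coefficients nonnegative and leading term $\frac{8}{45}\sigma^6$, exactly as you report. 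The one weak spot is your justification of $E>0$: positivity of the leading Taylor coefficient together with positivity at infinity does not by itself ``preclude sign changes,'' so you should replace that phrase with the all-coefficients-nonnegative computation above (or an equivalent derivative argument). Your second route --- sending $K\to\infty$ and using $\arccosh(\cosh\sigma\cosh K)=K+\ln\cosh\sigma+O(e^{-2K})$ to reduce everything to the classical strict monotonicity of $\tanh\sigma/\sigma$ --- sidesteps that inequality entirely and is the cleanest of the three arguments; either of your routes gives a complete proof.
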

\begin{proof}
Relabeling if necessary, we may assume that the points are placed on $\lambda$ as in Figure~\ref{fig:Expansion} \,: $v,y,x,u$.
Fix a point $w\in \lambda$ such that
\[
\HD(x,w) > \HD(y,w)\geq \HD(v,w).
\]
Using values defined above,  we have  $\dc>\da>\db\geq \dd $.

Define $\mu$  to be the geodesic perpendicular to $\lambda$ at the
fixed point $w$, and let $p_{\dk}$ be the
set of points on $\mu$ on one side of $\lambda$ parameterized by the
distance $\dk = \HD(p_{\dk},w)$.
The expression
\eqref{eq:quotient} is now written
\begin{align*}
\frac{L_{x,y}(p_{\dk})}{ L_{u,v}(p_{\dk})} & =
\frac{\left( \da^2 + (\da \coth \da) \dk^2 + O(\dk^4) \right)-
\left( \db^2 + (\db \coth \db) \dk^2 + O(\dk^4) \right)}
{\left( \dc^2 + (\dc \coth \dc) \dk^2 + O(\dk^4) \right)-
\left( \dd^2 + (\db \coth \dc) \dk^2 + O(\dk^4) \right)}
\\
& =
\frac{\left( \da^2 - \db^2 \right) +  \left( \da \coth \da -
      \db \coth \db \right) \dk^2 + O(\dk^4)}
{\left( \dc^2 - \dd^2 \right) + \left(\dc \coth \dc -
         \dd \coth \dd \right) \dk^2 + O(\dk^4)}.
\end{align*}

The function $g(x) = x \coth x $ is increasing for $x>0$, so that
\[
\left(\dc \tanh \dc - \dd \tanh \dd \right)  > \left( \da \tanh \da - \db \tanh \db \right).
\]
because of the inequalities established above. Therefore, the function
$\Phi_\Omega(p_{\dk})$ must be decreasing in $\dk$ for small
values of $\dk\geq 0$.
\end{proof}



\begin{thebibliography}{1}

\bibitem{Aebischer-Miner} B. Aebischer and R. Miner.
   \emph{Deformation of Schottky groups in complex hyperbolic space.}
    Conform. Geom. Dyn. 3 (1999), 24--36.



\bibitem{CDD} V. Charette, T. A. Drumm and R. Lareau-Dussault.
  \emph{Equidistant hypersurfaces of the bidisk.}
   Geom. Dedicata  163  (2013), 275–-284.

\bibitem{Goldman-Parker:triangle} W. M. Goldman and J. R. Parker.
  \emph{Complex hyperbolic ideal triangle groups.}
 J. Reine Angew. Math. 425 (1992), 71--86.


\bibitem{Mostow} G.D. Mostow.
   \emph{On a remarkable class of polyhedra in complex hyperbolic space.}
    Pac. J. Math. 86 (1980), 171--276.

\end{thebibliography}
\end{document}